\documentclass[12pt]{asl}

\usepackage{ABT}

\newcommand{\ee}{\supseteq_{\text{ee}}}
\newcommand{\eeby}{\subseteq_{\text{ee}}}
\newcommand{\ees}{\supset_{\text{ee}}}
\newcommand{\eebys}{\subset_{\text{ee}}}

\newtheorem{claim}[thm]{Claim}

\title{A variant proof of $\Con(\frb<\fra)$}

\author{J\"org Brendle and Andrew D. Brooke-Taylor
}

\date{\today}

\address{Group of Logic, Statistics and Informatics,\\
Graduate School of System Informatics,\\
Kobe University\\
Rokko-dai 1-1,\\
Nada, Kobe, 657-8501\\
Japan}

\email{brendle@kurt.scitec.kobe-u.ac.jp}

\thanks{Partially supported by Grant-in-Aid for Scientific Research (C) 24540126, 
   Japan Society for the Promotion of Science.}

\email{andrewbt@kurt.scitec.kobe-u.ac.jp}

\thanks{Written while holding a
JSPS Postdoctoral Fellowship for Foreign Researchers at Kobe University and
supported by JSPS Grant-in-Aid no. 23 01765.}

\begin{document}

\begin{abstract}
We present a variation of the proof in \cite{Bre:MFMF} of 
$\Con(\frb<\fra)$, which in particular removes some of the obstacles
to generalising the argument to cardinals $\kappa>\omega$.
\end{abstract}

\maketitle

\section{Introduction}
The generalisations of cardinal characteristics of the continuum to cardinals
$\ka$ greater than $\omega$ has generated significant interest recently.
A particular result that has so far resisted attempts at generalisation is
the statement that $\frb<\fra$ is consistent.
Blass, Hyttinen and Zhang \cite[Section~5]{BHZ:MFN}
briefly survey the different approaches known for proving $\Con(\frb<\fra)$,
highlighting the difficulties each presents for a generalisation.

We present here a variation on the proof of $\Con(\frb<\fra)$
given in \cite{Bre:MFMF}, which we hope will be more amenable to 
generalisation.  In particular, the proof in \cite{Bre:MFMF} relies on a
rank argument, which of course cannot be na\"{\i}vely generalised to 
uncountable $\ka$.  
We show here that it may be replaced by a suitable formulation
in terms of games, which \emph{does} generalise to higher $\ka$.
Indeed, with this observation, the question of forcing $\frb_\ka>\fra_\ka$
for some suitable large cardinal $\ka$ seems to boil down to interesting
questions about the existence of suitable filters on $\ka$.

%We are also able to streamline the
%rest of the argument, in particular eliminating the outer case distinction
%in the argument of \cite{Bre:MFMF}.  It is not clear, however, whether such
%streamlining will be helpful towards generalisation to larger $\ka$, or 
%hinder it through the use of special properties of $\omega$.

%%%%%%%%%%%%%%%%%%%%%%%%%%%%%%%%%%%%

\section{Preliminaries}

Let $\kappa$ be an infinite cardinal. A family $\calA \subseteq [\kappa]^\kappa$
is called \emph{almost disjoint} if $| A \cap B| < \kappa$ for any two distinct
members $A$ and $B$ of $\calA$. $\calA$ is a \emph{maximal almost disjoint
family} (\emph{mad family}, for short) if $\calA$ is almost disjoint and maximal with
this property. This means that for every $C \in [\kappa]^\kappa$ there is
$A \in \calA$ such that $|A \cap C | =\kappa$. The \emph{almost disjointness
number} $\fra_\kappa$ is the least size of a mad family on $\kappa$ of
size at least $cf (\kappa)$ (equivalently, of size $> cf (\kappa)$). 
In case $\kappa = \omega$ write $\fra$ for $\fra_\omega$.

Now assume $\kappa$ is a regular cardinal. For functions $f,g \in \kappa^\kappa$,
say that $g$ \emph{eventually dominates} $f$ ($f \leq^* g$ in symbols) if 
$f(\alpha ) \leq g(\alpha)$ holds for all $\alpha$ beyond some $\alpha_0 < \kappa$.
The \emph{unbounding number} $\frb_\kappa$ is the least size of an
unbounded family $\calF$ in the order $(\kappa^\kappa, \leq^*)$. That is,
for all $g \in \kappa^\kappa$ there is $f \in \calF$ with $f (\alpha) > g(\alpha)$ for
cofinally many $\alpha$'s. Again we write $\frb$ instead of $\frb_\omega$.

Let $\calF$ be a filter on $\omega$. \emph{Mathias forcing $\bbM (\calF)$ with
$\calF$} consists of conditions $(s,F)$ such that $s \in [\omega]^{<\omega}$,
$F \in \calF$, and $\max (s) < \min (F)$. $\bbM (\calF)$ is ordered by
$(t,G) \leq (s,F)$ if $s \subseteq t \subseteq s \cup F$ and $G \subseteq F$.
It is well-known and easy to see that $\bbM (\calF)$ is a $\sigma$-centered
forcing which introduces a pseudointersection $Z$ of the filter $\calF$.
This means that $Z \subseteq^* F$ for all $F \in \calF$, where $\subseteq^*$
denotes \emph{almost inclusion}: $A \subseteq^* B$ iff $A \smallsetminus B$ is finite.

In \cite{Bre:MFMF}, the notion of \emph{pseudocontinuity} is used.
This notion and the corresponding basic lemma can 
be nicely phrased in terms of continuity with respect to an
appropriate topology.

\begin{defn}
The \emph{initial segment topology} on $\omega$ is the topology which has
the (von Neumann) ordinals as open sets.
We denote $\omega$ endowed with this topology by $\omega_i$.
\end{defn}

\begin{defn}
A function to $\omega$ or $\omega^\omega$ is \emph{pseudocontinuous}
if it is continuous as a function to $\omega_i$ or $\omega_i^\omega$ 
respectively.
\end{defn}

Thus, a pseudocontinuous function $F:X\to\omega$ is one such that
for every $n\in\omega$, the set of $x$ in $X$ with image
at most $n$ is open.

\begin{lem}\label{pctsimbdd}
Compact sets in $\omega_i$ and $\omega_i^\omega$ are bounded.
In particular, any pseudocontinuous image in $\omega$ or $\omega^\omega$ 
of a compact set must be bounded.
\end{lem}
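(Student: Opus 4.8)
The plan is to establish the two boundedness statements directly from the structure of the initial segment topology, and then read off the ``In particular'' clause from the standard fact that a continuous image of a compact set is compact.

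First I would treat $\omega_i$. Since the open sets are exactly the von Neumann ordinals $n=\{0,1,\dots,n-1\}$, the initial segments $[0,n)$ for $n<\omega$ form an increasing (hence directed) open cover of $\omega$, and so of any subset $K\subseteq\omega$. If $K$ is compact, a finite subcover exists, and because the cover is nested a single $[0,N)$ already contains $K$; thus $\sup K<\omega$, which is precisely boundedness in $\omega$.

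Next, for $\omega_i^\omega$ carrying the product topology, I would argue coordinatewise. Each projection $\pi_n\colon\omega_i^\omega\to\omega_i$ is continuous, so for compact $K\subseteq\omega^\omega$ each image $\pi_n(K)$ is compact in $\omega_i$ and therefore bounded by the previous step. Putting $g(n)=\sup\pi_n(K)<\omega$ produces a single function $g\in\omega^\omega$ with $f(n)\le g(n)$ for all $f\in K$ and all $n$, so that $K$ is pointwise bounded by $g$.

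The ``In particular'' statement is then immediate: a pseudocontinuous map is by definition continuous into $\omega_i$ or $\omega_i^\omega$, so the image of a compact set under it is compact in the relevant space and hence bounded by the first two parts. I expect no serious obstacle in this lemma -- it is a routine compactness argument exploiting the rigidity of the initial segment topology; the only points needing care are verifying that the product topology interacts correctly with the coordinate projections and fixing the convention that ``bounded'' in $\omega^\omega$ means pointwise domination by a single function, after which the projection argument closes at once.
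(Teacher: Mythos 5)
Your proof is correct and follows essentially the same route as the paper: the paper also reduces $\omega_i^\omega$ to coordinatewise boundedness, phrasing it as a contradiction via the open cover $\{f : f(m)\le n\}$ for a coordinate $m$ where $K$ would be unbounded, which is just the preimage-of-initial-segments form of your projection argument. No issues.
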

\begin{proof}
The Lemma is clear for $\omega_i$.
Similarly,
compact $K\subset\omega_i^\omega$ are in fact bounded in the strict
(not just $\leq^*$) sense.
Otherwise, there would be some $m$ in $\omega$
such that $f(m)$ is unbounded in $\omega$ for $f\in K$, and then
the open sets 
\(
\mathcal{O}_{m,n}=\{f\in\omega_i^\omega\st f(m)\leq n\}
\)
for $n<\omega$
would form an open cover of $K$ with no finite subcover.
\end{proof}

As usual we may identify $\Power(\omega)$ with $2^\omega$ by way of the map 
taking sets to their characteristic functions, $\chi:X\mapsto\chi_X$.
We give $\Power(\omega)$ the corresponding topology, making $\chi$ a
homeomorphism from $\Power(\omega)$ to the Cantor space $2^\omega$.

\begin{defn}
For any cardinal $\la$, we call a filter $\calG\subseteq\Power(\omega)$
a \emph{$K_\la$-filter} if
it is generated by the union of fewer than $\la$ many 
compact subsets of $\Power(\omega)$.  We write $K_\sigma$ for $K_{\aleph_1}$.
\end{defn}

\begin{lem}\label{compactops}
If $K_{0},\ldots, K_{n-1}$ are (finitely many)
compact subsets of $\Power(\omega)$, then the pointwise intersection 
\[
\bigwedge_{i<n}K_i=\Big\{\bigcap_{i<n}G_i\ \Big|\ 
(G_0,\ldots,G_{n-1})\in\prod_{i<n}K_i\Big\}
\]
and the pointwise union
\[
\bigvee_{i<n}K_i=\Big\{\bigcup_{i<n}G_i\ \Big|\ 
(G_0,\ldots,G_{n-1})\in\prod_{i<n}K_i\Big\}
\]
are compact.
Furthermore, for any compact set $K\subseteq\Power(\omega)$, the
upward closure
\[
\bar K=\{A\in\Power(\ka)\st\exists B\in K(A\supseteq B)\}
\]
is also compact.
\end{lem}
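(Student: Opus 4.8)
The plan is to deduce all three statements from two standard facts about the compact Hausdorff space $2^\omega\cong\Power(\omega)$: finite products of compact sets are compact, and continuous images of compact sets are compact. The work therefore reduces to exhibiting each of the three sets as a continuous image of a finite product of compact sets.

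First I would treat the pointwise intersection and union simultaneously, by observing that the maps
\[
\wedge\colon(2^\omega)^n\to 2^\omega,\quad (G_0,\ldots,G_{n-1})\mapsto\bigcap_{i<n}G_i
\qquad\text{and}\qquad
\vee\colon(2^\omega)^n\to 2^\omega,\quad (G_0,\ldots,G_{n-1})\mapsto\bigcup_{i<n}G_i
\]
are continuous. To see this it suffices to check that preimages of subbasic open sets are open: under $\chi$, a subbasic open set has the form $\{A\st k\in A\}$ or $\{A\st k\notin A\}$ for some $k\in\omega$, and its $\wedge$-preimage (respectively $\vee$-preimage) is, by the definition of pointwise intersection (union), a finite intersection or finite union of sets each depending on a single coordinate of a single factor, hence open in the product topology. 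Since $\prod_{i<n}K_i$ is compact, its continuous images $\bigwedge_{i<n}K_i=\wedge[\prod_{i<n}K_i]$ and $\bigvee_{i<n}K_i=\vee[\prod_{i<n}K_i]$ are compact.

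For the upward closure $\bar K$, the key observation is that taking upward closure is itself a pointwise union against the whole space: since $A\supseteq B$ holds exactly when $A=B\cup A$, we have
\[
\bar K=\{B\cup C\st B\in K,\ C\in\Power(\omega)\}=\bigvee\!\big(K,\Power(\omega)\big).
\]
Because $\Power(\omega)\cong 2^\omega$ is itself compact, this is a pointwise union of two compact sets, and compactness of $\bar K$ follows from the case $n=2$ already established. (Equivalently, $\bar K$ is the image of the compact set $K\times\Power(\omega)$ under the continuous map $(B,C)\mapsto B\cup C$.)

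The only genuinely non-mechanical point is this last one --- recognising that upward closure can be packaged as a pointwise union against the entire (compact) Cantor space, so that no separate argument is needed. Everything else is the routine verification that $\wedge$ and $\vee$ are coordinatewise-defined, hence continuous, maps on the Cantor cube; I expect that continuity check to be the most tedious but least conceptually demanding step.
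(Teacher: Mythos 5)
Your proof is correct and follows essentially the same route as the paper: both arguments realise $\bigwedge_{i<n}K_i$ and $\bigvee_{i<n}K_i$ as continuous images of the Tychonoff-compact product $\prod_{i<n}K_i$, and both handle the upward closure by the same identification $\bar K=K\vee\Power(\omega)$. The only difference is that you spell out the subbasic-open-set check for continuity of $\wedge$ and $\vee$, which the paper simply asserts as clear.
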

\begin{proof}
The product $\prod_{i<n}K_i$ is compact by the Tychonoff theorem, and
the functions $\Power(\omega)^n\to\Power(\omega)$ given by
$(G_0,\ldots,G_{n-1})\mapsto\bigcap_{i<n}G_i$ and 
$(G_0,\ldots,G_{n-1})\mapsto\bigcup_{i<n}G_i$ are clearly continuous,
so $\bigwedge_{i<n}K_i$ and $\bigvee_{i<n}K_i$ are compact.
Finally, for compact $K\subseteq\Power(\omega)$, $\bar K$ is just 
$K\vee\Power(\omega)$.
\end{proof}

%%%%%%%%%%%%%%%%%%%%%%%%%%%%%%%%%%%%

\section{The proof}

We work in 
a model $V$ of ZFC in which $\la=\frc^V$ is
a regular cardinal satisfying $2^\la=\la^+$, 
and there is an unbounded, $<^*$-well-ordered sequence
$\langle f_\al:\al<\la\rangle$ 
of strictly increasing functions 
from $\omega$ to $\omega$.
For example, any model of GCH will suffice as a ground model, 
and these properties will be preserved in intermediate stages of our forcing
iteration.

Let $\calA$ be an infinite maximal almost disjoint family in $V$ of subsets of $\omega$.
\begin{thm}\label{PA}
There is a ccc forcing $\P(\calA)$ such that 
\[
\forces_{\P(\calA)}\calA\text{ is not mad and }
\langle f_\al:\al<\la\rangle\text{ is still unbounded.}
\]
\end{thm}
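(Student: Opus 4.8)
The plan is to take $\P(\calA)$ to be a Mathias forcing $\bbM(\calF)$ for a carefully chosen filter $\calF$ on $\omega$. By the remark in the Preliminaries, $\bbM(\calF)$ is $\sigma$-centered and hence ccc for any $\calF$, so the whole problem reduces to choosing $\calF$ and verifying the two forced assertions. To destroy maximality I would require $\omega\smallsetminus A\in\calF$ for every $A\in\calA$. This is compatible with $\calF$ being a proper filter: since $\calA$ is an infinite almost disjoint family, no finite union of its members is cofinite, so every finite intersection $\bigcap_{i<n}(\omega\smallsetminus A_i)$ is infinite. The generic pseudointersection $Z$ then satisfies $Z\subseteq^*\omega\smallsetminus A$, i.e.\ $|Z\cap A|<\omega$, for all $A\in\calA$, so $Z$ is an infinite set almost disjoint from every member of $\calA$ and $\calA$ is no longer maximal.

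For the second conjunct it suffices to show that no condition of $\bbM(\calF)$ can force some name to dominate the whole scale. I would prove the stronger statement that for every name $\dot g\in\omega^\omega$ and every condition $(s,F)$ there is a ground-model function $g^*$ with $(s,F)\forces\dot g\leq^* g^*$. Granting this, if some condition forced $f_\al\leq^*\dot g$ for all $\al<\la$, then the corresponding $g^*$ would satisfy $f_\al\leq^* g^*$ for all $\al$ in $V$, contradicting the unboundedness of $\langle f_\al:\al<\la\rangle$; hence $\langle f_\al:\al<\la\rangle$ remains unbounded in the extension.

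To obtain such a ground-model bound I would make $\calF$ a $K_\la$-filter, generated by the union of fewer than $\la$ compact subsets of $\Power(\omega)$, and exploit compactness through Lemmas~\ref{compactops} and~\ref{pctsimbdd}. Fix a name $\dot g$ and a condition $(s,F)$ with $F$ in a finite intersection of the generating compact sets; by Lemma~\ref{compactops} this intersection is itself compact, so the data below the condition that the generic can realise range over a single compact set $K$. The crux is to read off the values of $\dot g$ as a \emph{pseudocontinuous} function on $K$ with values in $\omega_i^\omega$. This is precisely where \cite{Bre:MFMF} uses a rank argument; I would instead organise the fusion/decision process as a game on $K$, arranging that the analysis of this game yields the required pseudocontinuous reading. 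Granting this, Lemma~\ref{pctsimbdd} says the image is bounded, which is exactly a ground-model function dominating $\dot g$ below $(s,F)$, as required.

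The principal difficulty is the construction of $\calF$ itself. One must put $\omega\smallsetminus A$ into the filter for every one of the (up to $\la$-many) members $A\in\calA$ while keeping $\calF$ generated by fewer than $\la$ compact sets over which \emph{arbitrary} names read pseudocontinuously. Reconciling the destruction of a mad family that may itself have size $\la$ with the $<\la$-compact generation demanded by the bounding argument is the heart of the matter, and is where the scale $\langle f_\al:\al<\la\rangle$ should be used to pack the complements into compact generators whose members are controlled by the $f_\al$. The second, equally essential, point is to make the pseudocontinuous reading actually go through, i.e.\ to set up the game so that the relevant strategy exists and delivers the hypothesis of Lemma~\ref{pctsimbdd}. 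This replacement of the rank argument by a game is the feature the paper advertises as surviving the passage to uncountable $\ka$.
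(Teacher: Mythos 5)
Your overall architecture --- Mathias forcing with a filter containing $\omega\smallsetminus A$ for every $A\in\calA$, ccc via $\sigma$-centredness, and preservation of unboundedness via compactness of the generators together with Lemmas~\ref{compactops} and~\ref{pctsimbdd} --- matches only the easy half of the paper's argument, and the step you defer as ``the principal difficulty'' is in fact the entire content of the theorem; your proposed resolution of it does not work. The paper splits into cases: if the dual filter $\calF$ of $\calA$ \emph{is} contained in a $K_\la$-filter $\calG$ (after first adding $\la$ Cohen reals), one simply forces with $\bbM(\calG)$ and quotes the known fact that Mathias forcing with a $K_\la$-filter preserves the unboundedness of the scale. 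The substantive case is when $\calF$ is \emph{not} contained in any $K_\la$-filter --- the dual filter of a mad family need not admit such an extension, and there is no mechanism by which the scale lets you ``pack the complements into compact generators.'' In that case the paper builds a filter $\calG\supseteq\calF$ that is \emph{not} $K_\la$, so your global bounding claim (every name $\dot g$ is forced below every condition to be $\leq^*$ some ground-model $g^*$) is unavailable; it is in any case stronger than what holds even for $K_\la$-filters with $\la>\aleph_1$, where one only obtains fewer than $\la$ many bounds, one per compact generator, and must then exploit the structure of the scale.

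What the paper does instead is a name-by-name diagonalisation of length $\la$ (using $2^\la=\la^+$ and an enumeration of ``preterms''): at stage $\be$ it plays an open, hence Gale--Stewart determined, game (the nominalisation exercise) associated with the preterm $\tau_\be$ and the compact generators accumulated so far. If Sensei has a winning strategy, a Cohen-generic branch through his strategy tree yields a set $G$ which, once added to the filter, guarantees that $\tau_\be$ is not the preterm of any total name (the KILL option); if Student wins from every opening move, one defines auxiliary functions $f_{G',k,j,T}$ on the compact generators, shows via pseudocontinuity and Lemma~\ref{pctsimbdd} that they are well defined and that some $f_\al$ escapes all of them, and adds a $K_\sigma$ filter $\calH_\be$ which forces any name with preterm $\tau_\be$ not to dominate $f_\al$ (the SEAL option). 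Thus the game and the compactness lemmas are used \emph{locally}, on the small $K_\la$ part $\calG'_\be$ of the partially built filter, rather than to read an arbitrary name continuously on a single compact set as you propose. You have correctly guessed that a determined game replaces the rank argument of \cite{Bre:MFMF}, but without the KILL/SEAL dichotomy and the recursive construction of the filter your outline does not close.
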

\begin{proof}
Let $\calF=\calF(\calA)$ be the dual 
filter of $\calA$, 
that is, the 
filter generated by the sets whose complements are finite or
in $\calA$.
Note that this filter is proper: if for some $k<\omega$ there were 
$\{A_i\st i<k\}\subset\calA$ such that
$|\bigcap_{i<k}\omega\smallsetminus A_i|<\omega$, 
any other element of $\calA$ would have 
infinite intersection with one of the $A_i$,
violating almost disjointness.
Note that the generic subset of $\omega$ introduced by Mathias forcing with
$\calF$, or any filter extending $\calF$, will end the madness of $\calA$,
as it will be almost contained in $\omega\smallsetminus A$ for every 
$A\in\calA$.

First we add $\lambda$ many Cohen reals. It is well-known that
the unboundedness of $\langle f_\al:\al<\la\rangle$ is preserved 
in this intermediate extension. In case $\calA$ is not mad anymore
in this extension we are done. Also, if $\calF$ is contained in
a $K_\lambda$ filter $\calG$ in the intermediate extension, we may simply
force with $\bbM (\calG)$ for it is well-known, and easy to see~\cite[3.2]{Bre:MFMF},
that Mathias forcing with a $K_\lambda$-filter does not destroy
the unboundedness of $\langle f_\al:\al<\la\rangle$. So assume
that $\calF$ is not contained in any $K_\lambda$-filter.

We shall recursively construct a 
filter $\calG\supseteq\calF$ such that furthermore
\begin{equation}
\forces_{\bbM(\calG)}\langle f_\al:\al<\la\rangle\text{ is unbounded}.\tag{$*$}
\end{equation}
Along the construction we shall take care of every potential
$\bbM(\calG)$-name for a function in $\omega^\omega$, 
either ``killing it'' or
``sealing it off''.

To be precise: let us refer to partial functions 
$\tau:[\omega]^{<\omega}\times\omega\dashrightarrow\omega$ as \emph{preterms},
and let $\calT=\{\tau_\be:\be<\la\}$ 
be an enumeration of the set of all preterms.
Note in particular that if $\calG\supseteq\calF$ is a filter and 
$\dot g$ is an $\bbM(\calG)$-name for a function in $\omega^\omega$, then
$\tau=\tau_{\dot g}$ given by
\[
\tau(s,m)=n\text{ iff }
\exists G\in\calG\left((s,G)\forces\dot g(m)=n\right)
\]
is a preterm, the \emph{preterm associated with $\dot g$}.
We shall constrain attention to names $\dot g$ such that 
$\bbone\forces_{\bbM(\calG)}\dot g\in\omega^\omega$, 
since every function from $\omega$ to $\omega$ in
the generic extension has such a name; we call such names \emph{total names}.

We construct filters $\calG_\be$ for
$0\leq\be\leq\la$, starting from $\calG_0=\calF$, such that
\renewcommand{\theenumi}{\roman{enumi}}
\begin{itemize}
\item for each $\be<\la$, $\calG_{\be+1}$ is generated by $\calG_\be$ and a
$K_{\sigma}$ filter $\calH_\be$, 
\item $\calG_\de=\bigcup_{\be<\de}\calG_\be$ for each limit ordinal $\de\leq\la$,
\end{itemize}
\renewcommand{\theenumi}{\arabic{enumi}}
and either 
\begin{description}
\item[(KILL)] for all filters $\calH\supseteq\calG_{\be+1}$,
$\tau_\be$ is not associated with any total $\bbM(\calH)$-name, or 
\item[(SEAL)] 
there is an $\al<\la$ such that
for all filters $\calH\supseteq\calG_{\be+1}$ and all
$\bbM(\calH)$-names $\dot g$, if
$\tau_{\dot g}=\tau_\be$ then
$\forces_{\bbM(\calH)}
\dot g\ngeq^*\check f_\al$.
\end{description}
Clearly any 
filter $\calG\supseteq\calG_\la$ will then satisfy ($*$).

So suppose $\calG_\be$ has been defined for some $\be<\la$; we wish to find
an appropriate $K_\sigma$ filter $\calH_\be$.
Note that $\calG_\be$ is generated by $\calF$ and a  
$K_\la$ filter $\calG'_\be$; 
without loss of generality we may assume that $\calF$ contains all 
cofinite subsets of $\omega$.  
Let $\calK_\be$ be a family of fewer than $\la$ many 
compact subsets of $2^\omega$
generating $\calG'_\be$.
By Lemma~\ref{compactops},
we may assume that $\calK_\be$ is
closed under finite pointwise intersections,
and that for all $K\in \calK_\be$, 
$K$ is upwards-closed under $\subseteq$, so that
$\calG_\be'=\bigcup\calK_\be$.

Everything that has come so far can actually be considered to have
occurred in a partial extension model, 
between the original model and the full extension with $\la$-many Cohens.
More explicitly, all (codes of) elements of $\calK_\be$ belong to this
intermediate model.

Let $\eebys$ denote the strict 
end-extension relation on $[\omega]^{<\omega}$: that is,
$s\eebys s'$ if and only if $s\subset s'$ and 
$\max(s)<\min(s'\smallsetminus s)$; define $\eeby$, $\ees$ and $\ee$ 
accordingly.

In \cite{Bre:MFMF}, a rank function was used.
For our generalisation, we take a different approach using games,
but use these games to much the same end as the rank function is used
in \cite{Bre:MFMF}.
It should be noted that our games are very closely related to the games
independently introduced by Guzm\'an, Hru\v s\'ak, and Mart\'inez~\cite{GHM:CF},
also in the context of a proof of $\Con(\frb<\fra)$.
%However, for our current purposes a more simplified, degenerate form of game
%suffices.

Let $\tau = \tau_\beta$.

\begin{defn}\label{nominalisation}
Given $\tau\in\calT$, the 
\emph{$\tau$ nominalisation exercise} is the following game.
There are two players, Sensei and Student.
On turn 0, Sensei chooses an $m\in\omega$ and
$t_0\in[\omega]^{<\omega}$.
At odd stages $2d+1$, 
Student plays a filter set
$F( d)\in\calF$
and a 
compact set $K( d)\in\calK_\beta$.
At even stages $2d+2$, 
Sensei plays an element $t_{d+1}$ of $[\omega]^{<\omega}$
such that
\begin{itemize}
\item $t_{d+1}$ end-extends $t_d$
\item $t_{d+1}\smallsetminus t_d\subseteq  F(d)$
\item $t_{d+1}\smallsetminus t_d$ meets every member of $K(d)$.
\end{itemize}
%If there is no such $t_{d+1}$, Sensei declares that Student has passed, and
%the game ends.
If there is $s\subseteq t_{d+1}$
end extending $t_0$ such that $(s,m)\in\dom(\tau)$,
Sensei declares Student to have passed and the game ends.
If the game continues for infinitely many stages, then (clearly) Student
has failed.
\end{defn}

Note that, since $\calG_\beta$ is a filter, and by compactness of $K(d)$,
a $t_{d+1}$ satisfying the requirements always exists. Also notice that if Student wins, he
wins after finitely many steps. Hence the game is open and, by the classical
Gale-Stewart Theorem, determined.

%Note that the only choice that Sensei makes is move 0, playing mechanically
%according to $\stemwo$ thereafter.

As in \cite{Bre:MFMF}, we now distinguish two cases 
(in \cite{Bre:MFMF} they are \emph{Subcases}), corresponding to 
options (KILL) and (SEAL) above. 

\subsection{Case a.}  
There are $m\in\omega$ and $t_0\in [\omega]^{<\omega}$
such that Sensei has a winning strategy in the $\tau$ nominalisation exercise
with 0th move $(m,t_0)$:
play will continue for infinitely many steps.
In this case we shall choose $\calH_\be$ in such a way that (KILL) holds:
$\tau$ will not correspond to a name for a function $\omega\to\omega$ in
the generic extension.  The reader may wish to remember which case is which
by the mnemonic ``the $\tau$ that can be named is not the eternal $\tau$.''

We shall actually work in the extension of such the intermediate
model by one further Cohen function $c:\omega\to\omega$.

Consider the tree $T$ of all possible sequences of plays 
$(t_0,t_1,t_2,\ldots)$ for Sensei according to his strategy, corresponding to
all possible plays of Student.
Note that $T$ is infinitely branching since $\calF$ extends the Frechet
filter.
Use the Cohen function $c$ to choose a branch through $T$, and denote the
union of the $t_i$ of this branch by $G$.
There is no $(s,m)$ with $m$ from Sensei's first move and
$t_0\eeby s\subseteq G$ such that $(s,m)\in\dom(\tau_\be)$.
Indeed otherwise, the $\tau_\be$ nominalisation exercise would have ended once Sensei
played $t_d$ sufficiently long to cover $s$.
Thus, for any filter $\calH\ni G$, $\tau\neq\tau_{\dot g}$ for any total
$\bbM(\calH)$ name $\dot g$.
We may therefore simply take $\calH_\be=\{G\}$ in order to satisfy (KILL).
To check that $\{G\}\cup\calG_\be$ generates a filter,
consider any $F\in\calF$ and $G'\in\calG'_\be$, say $G'$ is in the 
compact set $K\in\calK_\be$.
For every $t_d\in T$, there is a successor node $t_{d+1}$ in the tree $T$ 
that is Sensei's response, according to his strategy, to Student playing $F$ and $K$, 
and so in particular this
$t_{d+1}$ meets the intersection of $F$ and every member of $K$.
Thus, by Cohen genericity we have that $|G\cap F\cap G'|=\omega$,
completing Case a. (Note that $G'$ may not belong to the intermediate model;
this, however, is irrelevant for it is sufficient that $K$ does. By genericity the
Cohen real $c$ will produce infinitely many $d$ such that $t_{d+1} \smallsetminus t_d$
is contained in $F$ and meets every $G'' \in K$, and this is clearly absolute and
thus also holds for $G'$.)

\subsection{Case b.}  The negation of Case a: 
for every 0th move $(m,t_0)$ by Sensei, Student has a winning strategy in the $\tau_\be$
nominalisation exercise.
In this case we wish to choose $\calH_\be$ in such a way that (SEAL)
holds.

%As in Case a, we assume we are working in a partial extension model
%containing everything that has come so far and a further Cohen real,
%this time considered as a subset $C\subseteq \omega$.
%Note that, since winning plays for Student are finite sequences of
%ground model sets $F$ and ground model codes for compact sets $K$,
%Student can also always pass in this intermediate model.
%Observe that for every $G\in \calG_\be$ 
%(including in particular elements of $\calF$),
%$|G\cap C|=\omega$.
%For those readers familiar with \cite{Bre:MFMF}, with this observation
%we can use $C$ in the place of the $A_i$ of Section 3.11 of that paper.
%This in turn allows us to eliminate the outer case distinction
%of that paper.

Since Sensei chooses his moves from a countable set, there are clearly
only countable many filter sets $F_\ell \in \calF$, $\ell\in\omega$, which appear as $F(d)$ in
some $2d +1$st move of Student playing according to his strategy.

Suppose that for all but less than $\lambda$ many members $A$ of $\calA$,
there is $G \in \calG'_\beta$ such that $A \cap G$ is finite. Then, adding less
than $\lambda$ many sets of the form $\omega \smallsetminus A$, $A \in \calA$, to $\calG'_\beta$
results in a $K_\lambda$ filter containing $\calF$. This contradicts our initial assumption.
Hence, for $\lambda$ many $A \in \calA$, $A \cap G$ is infinite for all $G \in \calG'_\beta$.
Let $A_j$, $j \in \omega$, be countably many such $A$'s such that for
each $j$ and $\ell$, $A_j$ is almost contained in $F_\ell$: this is possible
because $\calF$ is the dual filter of the mad family $\calA$. 
 
For each $G'\in\calG'_\be$, $k\in\omega$, $j\in\omega$, and finite subset $T$ of
$[\omega]^{<\omega}$,
we define a function
$f_{G',k,j,T}:\omega\to\omega$ as follows.
\begin{align*}
f_{G',k,j,T}(m)=
\min\{n\st&
\text{ for any partition }A_j=\bigcup_{i<k}B_i
\text{ there is }i<k
\text{ s.t.}\\
&\forall t\in T \exists s\ees t(
s\smallsetminus t\subseteq B_i\cap G'\land
\tau_\beta(s,m)\leq n)\}.
\end{align*}

\begin{lem}\label{fwdefnd}
For every $G'\in\calG'_\be$, $k,j\in\omega$, and $T\in[[\omega]^{<\omega}]^{<\omega}$,
$f_{G',k,j,T}$ is well-defined.  
\end{lem}
\begin{proof}
Fix $m\in\omega$.  Given a partition $\{B_i\st i<k\}$ of ${A_j}$,
let ``$n$ suffices for $\{B_i\st i<k\}$'' mean the natural thing
in the context of the definition of $f_{G',k,j,T}$,
namely, 
that there is $i<k$
such that for every $t\in T$
there is $s\ees t$
with
$s\smallsetminus t\subseteq B_i\cap G'$ and
$\tau_\beta(s,m)\leq n$.
So now fix a partition $\{B_i\st i<k\}$ of ${A_j}$;
we shall show that there is a $ n\in\omega$ that suffices for it.
Let $i<k$ be such that $|B_i\cap G'\cap G|=\omega$ for every 
$G\in\calG_\be'$:
such an $i$ must exist,
since ${A_j}$ has infinite intersection with every member of 
the filter $\calG_\be'$.
Finally, fix $t\in T$.

Consider a play of the $\tau_\be$ naming exercise in which  Student follows his strategy,
Sensei's 0th move is $(m, t_0)$ with $t_0 = t$, and his later moves 
always satisfy the additional requirement $t_{d+1} \smallsetminus t_d \subseteq B_i \cap G'$. 
Since $B_i$ is almost contained in all $F(d)$ played by Student according to his
strategy and since $B_i$ has infinite intersection with all $G \in \calG'_\beta$,
Sensei always has a valid such move. 

%We claim that there is always a valid move for Sensei to play in response to
%a move by Student, and so by case assumption, the other winning condition for
%Student must eventually hold.
%Indeed, given a move $t_d$ by Sensei, suppose Student plays $F(d)$ and $K(d)$.
%Every $G\in K(d)$ has infinite intersection with $F(d)$ above $\max(t_d)$,
%and so $K(d)$ is covered by open sets given by elements of $F(d)$.
%By compactness, finitely many suffice; taking such a finite tuple $u$ from
%$F(d)$, we have that $t_{d+1}=t_d\cat u$ is a valid move for Sensei.

So we have that eventually Sensei plays a $t_d$ such that 
\[
\exists n_t\in\omega\exists s\subseteq t_d(s\ees t\land \tau_\be(s,m)=n_t).
\]
Of course, by the construction of the game,
$s\smallsetminus t_0\subseteq B_i\cap G'$.
Taking such an $n_t$ for each $t\in T$ and setting
$n=\max_{t\in T}(n_t)$, we have that $n$ suffices for $\{B_i\st i<k\}$.

Now, with $k$ still fixed but allowing the partition 
$\{B_ i\st i<k\}$ to vary, let us denote by $ n(\{B_i\st i<k\})$ 
the least $n$ that suffices for $\{B_ i\st i<k\}$.
The space of partitions of ${A_j}$ into $k$ pieces can be identified
with $k^{A_j}$ and thus 
when endowed with the product topology is a compact
topological space.  Moreover, with this topology on the space of partitions,
the function $ n$
sending $\{B_ i\st i<k\}$ to $ n(\{B_ i\st i<k\})$ 
is clearly pseudocontinuous,
since $ n$ being sufficient for $\{B_ i\st i<k\}$ is witnessed by
finitely many finite tuples $s\smallsetminus t$ 
from $B_ i$, which of course define an open set in $k^{A_j}$.
Thus by Lemma~\ref{pctsimbdd} the image of the function $n$ is bounded below 
$\omega$.
The least such upper bound will be $f_{G',k,j,T}(m)$, and it follows
that $f_{G',k,j,T}$ is well-defined.
\end{proof}

\begin{lem}\label{unbddal}
There exists an $\al<\la$ such that for all $G'\in\calG'_\be$, $k,j\in\omega$ and
$T\in[[\omega]^{<\omega}]^{<\omega}$, $f_\al\nleq^*f_{G',k,j,T}$.
\end{lem}
\begin{proof}
We first note that, given $k,j$, $T$, and compact $K \in \calK_\beta$,
the function $f_{\cdot,k,j,T}$
sending $G'$ to $f_{G',k,j,T}$ is pseudocontinuous
from $K$ to $\omega^\omega$,
by much the same argument as in the proof of Lemma~\ref{fwdefnd}.
Indeed, fixing $m$ and $n$,
$\{ G' \st f_{G',k,j,T} (m) \leq n \}$ is open in $K$.

%the sufficiency of $n$ for $\{B_i\st i<k\}$
%is witnessed by finitely many finite tuples in some $B_i\cap G'$,
%and these define an open set in $k^{A_j}$ such that $n$ is sufficient for 
%every partition contained in it.
%By compactness,
%finitely many of these open sets suffice to cover $k^{A_j}$, and
%we get a finite tuple from $G'$
%witnessing that $f_{G',k,j,T}(m)$ is at most $n$, whence
%$f_{\cdot,k,j,T}$ is pseudocontinuous.

We thus have from Lemma~\ref{pctsimbdd} that for each $K\in\calK_\beta$,
$f_{\cdot,k,j,T}``K$ is bounded in $\omega^\omega$, say by $h_K$.
Since $\calK_\beta$ has fewer than $\la$ many elements, there is an $\al<\la$
such that $f_\al$ is not eventually dominated by any of the $h_K$,
and hence not by any $f_{G',k,j,T}$.
\end{proof}

We now show that $\al$ as given by Lemma~\ref{unbddal} will make
(SEAL) hold for an appropriate choice of $\calH_\be$.
Given
$t\in[\omega]^{<\omega}$, $G\in\Power(\omega)$, and $m\in\omega$, let
\[
g^\be_{t,G}(m)=
\min\{n\st \exists s\ee t(s\smallsetminus t\subseteq G\land
\tau_\be(s,m)=n)\}
\]
if the set on the right hand side is non-empty, and otherwise
put $g^\be_{t,G}(m)=\omega$.  
Thus, $g^\be_{t,G}$ is a function in $(\omega+1)^\omega$.
Let $\al<\la$ be such that $f_\al$ is not dominated by any 
$f_{G',k,j,T}$, as given by Lemma~\ref{unbddal}, and define
\[
\calH_\be=\{H\subseteq\omega\st \exists t\in[\omega]^{<\omega}
(g^\be_{t,\omega\smallsetminus H}\geq^*f_{\al})\}.
\]
Note that given $t\in[\omega]^{<\omega}$ and $m_0\in\omega$, the set
\[
\big\{H\subseteq\omega\st\forall m\geq m_0
\big(g^\be_{t,\omega\smallsetminus H}( m)\geq f_{\al}( m)\big)\big\}
\]
is closed in $\Power(\omega)$, 
and hence compact. 
Therefore, $\calH_\be$ is a $K_{\sigma}$ set.

%Also note that for $G\in \calG'_\be$,
%\(g^\be_{t,G}(m)\leq f_{G,1,\{t\}}(m)\), so no complement of an element of
%$\calG'$ is in $\calH_\be$, and in particular no bounded subset of $\omega$.

To see that this set is an appropriate choice of $\calH_\be$ as called for 
above, we check the following.

\begin{claim}
Any filter $\calH\supseteq\calH_\be$ satisfies (SEAL).
\end{claim}
\begin{proof}
Let $\calH\supseteq\calH_\be$ be a filter, and assume 
$\tau_\be=\tau_{\dot g}$ for some $\bbM(\calH)$-name $\dot g$ for a function
in $\omega^\omega$.
Suppose there were $(t,G)\in\bbM(\calH)$ and $m_0\in\omega$ such that
\[
(t,G)\forces_{\bbM(\calH)}
\forall m\geq\check m_0\,\big(\dot g( m)\geq\check{f}_\al( m)\big).
\]
By the definition of $g^\be_{t,G}$, we must then also
have $g^\be_{t,G}( m)\geq f_\al( m)$ for all $ m\geq m_0$.
So $\omega\smallsetminus G\in \calH_\be\subseteq\calH$, contradicting the fact
that $\calH$ is a filter.
\end{proof}

\begin{claim}\label{HbeGbekacplt}
$\calH_\be\cup\calG_\be$ generates a filter.
\end{claim}
\begin{proof}
We take $F \in \calF$, $G'\in\calG_\be'$, and for some $k<\omega$, 
$H_i\in\calH_\be$ for $i<k$, and argue that
$F\cap G'\cap\bigcap_{i<k}H_i$ has cardinality $\omega$.
Assume for the sake of contradiction that 
$F\cap G'\subseteq^*\bigcup_{i<k}\omega\smallsetminus H_i$.
For each $i<k$, fix $t_i\in[\omega]^{<\omega}$ such that
$g_{t_i,\omega\smallsetminus H_i}^\be\geq^*f_\al$.
Also fix $j$ such that $A_j \subseteq^* F$.
Without loss of generality, we may take $a<\omega$ such that $A_j \smallsetminus a \subseteq F$,
$F\cap G'\smallsetminus a\subseteq\bigcup_{i<k}\omega\smallsetminus H_i$
and 
$\max(t_i)\geq a$ for every $i<k$
(if necessary by extending each $t_i$ with a sufficiently large element
of $\omega\smallsetminus H_i$: this can only increase the values of 
$g_{t_i,\omega\smallsetminus H_i}^\be$).
Fix $m_0\in\omega$ such that 
$g_{t_i,\omega\smallsetminus H_i}^\be(m)\geq f_\al(m)$
for all $m\geq m_0$ and $i<k$.
Let $T=\{t_i\st i<k\}$ and let 
$\{B_i\st i<k\}$ be
a partition of $A_j$ such that 
$B_i \cap G' \smallsetminus a\subseteq\omega\smallsetminus H_i$ for all $i<k$.
By the definition of $f_\al$, there is some $m> m_0$ such that
$f_\al(m)>f_{G',k,j,T}(m)$; take such a $m$, and 
denote $f_{G',k,j,T}(m)$ by $n$.
By the definition of $f_{G',k,j,T}$, there is an $i$ 
such that for all $t\in T$, there is $s\ees t$ such that 
$\tau_\be(s,m)\leq n$ and
$s\smallsetminus t$ is a subset of the intersection of $G'$ and $B_i$. In particular,
$\min(s\smallsetminus t_i)>\max(t_i)\geq a$,
$s\smallsetminus t_i\subset B_i \cap G'$,
and $\tau_\be(s,m)\leq n$.
Thus $s\smallsetminus t_i\subseteq\omega\smallsetminus H_i$,
from which we have $g_{t_i,\omega\smallsetminus H_i}^\beta(m)\leq n<f_\al(m)$,
contradicting the choice of $m_0$.
\end{proof}

This completes the construction of $\calG_{\be+1}$ from $\calG_\be$,
and hence the proof of Theorem~\ref{PA}.
\end{proof}

We are now ready for the consistency of $\frb <\fra$.
Recall from 
the beginning of this section that our 
ground model $V$ satisfies $\frc = \lambda$ is regular, $2^\lambda = \lambda^+$,
and $\langle f_\al:\al<\la\rangle$ is unbounded $<^*$-well-ordered.

\begin{thm}
There is a ccc forcing $\P$ such that 
\[
\forces_{\P} \fra = \lambda^+ \text{ and }
\langle f_\al:\al<\la\rangle\text{ is still unbounded.}
\]
In particular, $\frb \leq \lambda < \lambda^+ = \fra$ is consistent.
\end{thm}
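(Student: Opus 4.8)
The plan is to iterate the single-step forcing of Theorem~\ref{PA} with finite support. I would build a finite support iteration $\langle P_\xi,\dot Q_\xi:\xi<\la^+\rangle$ together with a bookkeeping function assigning to each stage $\xi$ a $P_\xi$-name $\dot{\calA}_\xi$ for a subfamily of $[\omega]^\omega$ of size at most $\la$; if $\forces_{P_\xi}\dot{\calA}_\xi$ is an infinite mad family, set $\dot Q_\xi=\P(\dot{\calA}_\xi)$, and otherwise let $\dot Q_\xi$ be trivial. Since $\la=\frc$ we have $\la^\omega=\la$, so an induction gives $|P_\xi|\leq\la$ for every $\xi<\la^+$, whence each intermediate model $V^{P_\xi}$ still satisfies $\frc=\la$; together with $2^\la=\la^+$ this means there are only $\la^+$ relevant names, and a standard bookkeeping scheme lists them so that each is considered at a stage after it becomes available. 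Set $\P=P_{\la^+}$.

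That $\P$ is ccc is immediate, as a finite support iteration of ccc forcings is ccc and each $\P(\calA)$ is ccc by Theorem~\ref{PA}. For the value of $\fra$, note first that the iteration adds exactly $\la^+$ reals, so $V^{\P}\models\frc=\la^+$ and hence $\fra\leq\la^+$. For the reverse inequality, suppose toward a contradiction that $\calA\in V^{\P}$ is an infinite mad family of size at most $\la$. By ccc and finite support each member of $\calA$ is added below $\la^+$, and as $|\calA|\leq\la<\la^+=cf(\la^+)$ all of $\calA$ lives in some $V^{P_\xi}$ with $\xi<\la^+$. Moreover $\calA$ is mad in every $V^{P_\eta}$ with $\xi\leq\eta<\la^+$: almost disjointness is absolute, and any $C\in[\omega]^\omega\cap V^{P_\eta}$ witnessing non-maximality would already lie in $V^{\P}$, contradicting maximality there. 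By the bookkeeping some stage $\eta\geq\xi$ is assigned (a name for) $\calA$; since $\calA$ is still mad in $V^{P_\eta}$ we have $\dot Q_\eta=\P(\calA)$, and Theorem~\ref{PA} gives that $\calA$ is not mad in $V^{P_{\eta+1}}$, hence not in $V^{\P}$ --- a contradiction. Thus $\fra\geq\la^+$, so $\fra=\la^+$.

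The main point, and the main obstacle, is preserving the unboundedness of $\langle f_\al:\al<\la\rangle$ throughout; equivalently, proving by induction on $\xi\leq\la^+$ that $V^{P_\xi}\models\langle f_\al:\al<\la\rangle$ is unbounded. The ground-model hypotheses ($\frc=\la$ regular, $2^\la=\la^+$, and the sequence unbounded and $<^*$-well-ordered) persist in each $V^{P_\xi}$ for $\xi<\la^+$, so Theorem~\ref{PA} applies at every step and settles the successor case: unboundedness in $V^{P_\xi}$ yields unboundedness in $V^{P_{\xi+1}}$. At a limit $\delta$ of uncountable cofinality the ccc ensures that any $g\in\omega^\omega\cap V^{P_\delta}$ is already added at some $V^{P_\xi}$, $\xi<\delta$, so the inductive hypothesis provides an $f_\al$ with $f_\al\nleq^*g$. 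The delicate case is a limit $\delta$ of cofinality $\omega$, where the finite support construction introduces a Cohen real and $g$ need not be captured below $\delta$; here I would invoke the classical preservation theorem for finite support iterations (as used in \cite{Bre:MFMF}), whose content is precisely that, when each iterand preserves unboundedness, the reals appearing at countable-cofinality limits do not dominate the sequence either. Checking that the per-step preservation supplied by Theorem~\ref{PA} is of the uniform form required to pass through these limits is the crux of the argument.

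Granting the preservation, $V^{\P}$ satisfies that $\langle f_\al:\al<\la\rangle$ is an unbounded family of size $\la$, whence $\frb\leq\la$, while $\fra=\la^+$. Therefore $\frb\leq\la<\la^+=\fra$, which in particular establishes $\Con(\frb<\fra)$.
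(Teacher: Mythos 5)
Your proposal is correct and follows essentially the same route as the paper: a finite support iteration of length $\lambda^+$ of the forcings $\P(\calA)$ from Theorem~\ref{PA}, with bookkeeping (using $2^\lambda=\lambda^+$) to catch all small mad families, ccc from finite support iteration of ccc posets, and preservation of the unboundedness of $\langle f_\al:\al<\la\rangle$ at successors via Theorem~\ref{PA} and at limits via the standard preservation theorem for finite support iterations. The paper states this in three sentences; your version merely fills in the routine details (cardinality of the iterands, reflection of a small mad family to an initial stage, the cofinality-$\omega$ limit case), all correctly.
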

\begin{proof} 

Perform a finite support iteration of orderings of type $\P (\calA)$ of
length $\lambda^+$, going through all (names for) mad families along
the way by a bookkeeping argument (this is possible by
the assumption $2^\lambda 
= \lambda^+$). The unboundedness of $\langle f_\al:\al<\la\rangle$ 
is preserved in the successor step of the iteration by Theorem~\ref{PA}
and in the limit step, by standard preservation results.
\end{proof}

\newpage

\bibliographystyle{asl}
\bibliography{ABT}

\end{document}